\newcommand{\bunderline}[1]{\underline{#1\mkern-4mu}\mkern4mu }
\newcommand{\one}{\mathbbm 1}
\def\reals{\mathbb{R}}
\def\ereals{\overline{\mathbb{R}}}
\def\rinterior{\mathop{\rm rint}}
\def\comp{\raise 1pt \hbox{$\scriptstyle\circ$}}
\def\argmin{\mathop{\rm argmin}\limits}
\def\minimize{\mathop{\rm minimize}\limits}
\def\st{\mathop{\rm subject\ to}}
\def\dom{\mathop{\rm dom}\nolimits}
\def\upto{{\raise 1pt \hbox{$\scriptstyle \,\nearrow\,$}}}
\def\downto{{\raise 1pt \hbox{$\scriptstyle \,\searrow\,$}}}
\def\acl{\mathop{\rm acl}\nolimits}
\def\cl{\mathop{\rm cl}\nolimits}
\def\tos{\rightrightarrows}
\def\FF{(\F_t)_{t=0}^T}
\def\one{\mathbbm 1}
\def\ovr{\mathop{\rm over}}
\def\B{{\cal B}}
\def\F{{\cal F}}
\def\N{{\cal N}}
\def\R{{\mathbb R}}
\def\Q{{\cal Q}}
\def\U{{\cal U}}
\def\Y{{\cal Y}}
\newtheorem{theorem}{Theorem}
\newtheorem{lemma}[theorem]{Lemma}
\newtheorem{example}[theorem]{Example}
\theoremstyle{definition}
\title{Duality and optimality conditions in stochastic optimization and mathematical finance}
\author{Sara Biagini\footnote{University of Pisa} \and Teemu Pennanen\footnote{King's College London} \and Ari-Pekka Perkki\"o\footnote{Technische Universit\"at Berlin. The author is grateful to the Einstein Foundation for the financial support.}}
\begin{document}

\maketitle

\begin{abstract}
This article studies convex duality in stochastic optimization over finite discrete-time. The first part of the paper gives general conditions that yield explicit expressions for the dual objective in many applications in operations research and mathematical finance. The second part derives optimality conditions by combining general saddle-point conditions from convex duality with the dual representations obtained in the first part of the paper. Several applications to stochastic optimization and mathematical finance are given.
\end{abstract}

\section{Introduction}

Let $(\Omega,\F,P)$ be a complete probability space with a filtration $\FF$ of complete sub $\sigma$-algebras of $\F$ and consider the dynamic stochastic optimization problem
\begin{equation}\label{p}\tag{$P_u$}
\minimize\quad Ef(x,u):=\int f(x(\omega),u(\omega),\omega)dP(\omega)\quad\text{over $x\in\N$}
\end{equation}
parameterized by a measurable function $u\in L^0(\Omega,\F,P;\reals^m)$. Here and in what follows,
\[
\N := \{(x_t)_{t=0}^T\,|\,x_t\in L^0(\Omega,\F_t,P;\reals^{n_t})\},
\]
for given integers $n_t$ and $f$ is an extended real-valued $\B(\reals^n\times\reals^m)\otimes\F$-measurable function on $\reals^n\times\reals^m\times\Omega$, where $n:=n_0+ \ldots +n_T$. The variable $x\in\N$ is interpreted as a decision strategy where $x_t$ is the decision taken at time $t$. Throughout this paper, we define the expectation of a measurable function $\phi$ as $+\infty$ unless the positive part $\phi^+$ is integrable (In particular, the sum of extended real numbers is defined as $+\infty$ if any of the terms equals $+\infty$). The function $Ef$ is thus well-defined extended real-valued function on $\N\times L^0(\Omega,\F,P;\reals^m)$. We will assume throughout that the function $f(\cdot,\cdot,\omega)$ is {\em proper}, {\em lower semicontinuous} and {\em convex} for every $\omega\in\Omega$.

It was shown in \cite{pen11c} that, when applied to \eqref{p}, the conjugate duality framework of Rockafellar~\cite{roc74} allows for a unified treatment of many well known duality frameworks in stochastic optimization and mathematical finance. An important step in the analysis is to derive dual expressions for the optimal value function
\[
\varphi(u) := \inf_{x\in\N}Ef(x,u)
\]
over an appropriate subspace of $L^0$. In this context, the absence of a duality gap is equivalent to the closedness of the value function. Pennanen and Perkki\"o~\cite{pp12} and more recently Perkki\"o~\cite{per14b} gave conditions that guarantee that $\varphi$ is closed and that the optimum in \eqref{p} is attained for every $u\in L^0$. The given conditions provide far reaching generalizations of well-known no-arbitrage conditions used in financial mathematics. 

The present paper makes two contributions to the duality theory for \eqref{p}. First, we extend the general duality framework of \cite{pen11c} by allowing more general dualizing parameters and by relaxing the time-separability property of the Lagrangian. We show that, under suitable conditions, the expression in \cite[Theorem~2.2]{pen11c} is still valid in this extended setting. This also provides a correction to \cite[Theorem~2.2]{pen11c} which omitted certain integrability conditions that are needed in general; see \cite{pen15}. Second, we give optimality conditions for the optimal solutions of \eqref{p}. Again, we follow the general conjugate duality framework of \cite{roc74} by specializing the saddle-point conditions to the present setting. The main difficulty here is that, in general, the space $\N$ does not have a proper topological dual so we cannot write the generalized Karush-Kuhn-Tucker condition in terms of subgradients. Nevertheless, the dual representations obtained in the first part of the paper allow us to write the saddle-point conditions more explicitly in many interesting applications.

In the case of perfectly liquid financial markets, we recover well-known optimality conditions in terms of martingale measures. For Kabanov's currency market model with transaction costs~\cite{kab99}, we obtain optimality conditions in terms of dual variables that extend the notion of a ``consistent price system'' to possibly nonconical market models. We treat problems of convex optimal control under the generalized framework of Bolza much as in \cite{rw83}. Our formulation and its embedding in the conjugate duality framework of \cite{roc74} is slightly different from that in \cite{rw83}, however, so direct comparisons are not possible. Our formulation is motivated by applications in mathematical finance. In particular, the optimality conditions for the currency market model are derived by specializing those obtained for the problem of Bolza.

\section{Duality}\label{sec:duality}

From now on, we will assume that the parameter $u$ belongs to a decomposable space $\U\subset L^0$ which is in separating duality with another decomposable space $\Y\subset L^0$ under the bilinear form
\[
\langle u,y\rangle = E(u\cdot y).
\]
Recall that $\U$ is {\em decomposable} if
\[
\one_Au+\one_{\Omega\setminus A}u'\in\U
\]
whenever $A\in\F$, $u\in\U$ and $u'\in L^\infty$; see e.g.\ \cite{roc76}. Examples of such dual pairs include the Lebesgue spaces $\U=L^p$ and $\Y=L^q$ and decomposable pairs of Orlicz spaces; see Section~\ref{sec:oc}. The {\em conjugate} of $\varphi:\U\to\ereals$ is the extended real-valued convex function on $\Y$ defined by
\[
\varphi^*(y)=\sup_{u\in \U}\{\langle u,y\rangle-\varphi(u)\}.
\]
Here and in what follows, $\ereals:=\reals\cup\{+\infty,-\infty\}$. If $\varphi$ is closed\footnote{A convex function is {\em closed} if it is lower semicontinuous and either proper or a constant. A function is {\em proper} if it never takes the value $-\infty$ and it is finite at some point.} with respect to the weak topology induced on $\U$ by $\Y$, the biconjugate theorem (see e.g.\ \cite[Theorem~5]{roc74}) gives the {\em dual representation}
\begin{align*}
\varphi(u) = \sup_{y\in\Y}\{\langle u,y\rangle - \varphi^*(y)\}.
\end{align*}
This simple identity is behind many well known duality relations in operations research and mathematical finance. It was shown in \cite{pp12} that appropriate generalizations of the {\em no-arbitrage} condition from mathematical finance guarantee the closedness of $\varphi$. Recently, the conditions were extended in \cite{per14b} to allow for more general objectives.

In general, it may be difficult to derive more explicit expressions for $\varphi^*$. Following \cite{roc74}, one can always write the conjugate as
\[
\varphi^*(y) =-\inf_{x\in\N}L(x,y),
\]
where the {\em Lagrangian} $L:\N\times\Y\to\ereals$ is defined by
\begin{equation*}
L(x,y)=\inf_{u\in\U}\{Ef(x,u)-\langle u,y\rangle \}.
\end{equation*}
We will show that, under appropriate conditions, the second infimum in 
\[
\varphi^*(y) =-\inf_{x\in\N}\inf_{u\in\U}\{Ef(x,u)-\langle u,y\rangle \}
\]
can be taken scenariowise while the first infimum may be restricted to the space $\N^\infty$ of essentially bounded strategies. Both infima are easily calculated in many interesting applications; see \cite{pen11c} and the examples below.
 
Accordingly, we define the {\em Lagrangian integrand} on $\reals^n\times\reals^m\times\Omega$ by
\[
l(x,y,\omega) := \inf_{u\in\R^m}\{f(x,u,\omega)-u\cdot y\}.
\]
We will also need the pointwise conjugate of $f$:
\begin{align*}
f^*(v,y,\omega) :&= \sup_{x\in\reals^{n},u\in\reals^m}\{x\cdot v+u\cdot y-f(x,u,\omega)\}\\
&= \sup_{x\in\reals^{n}}\{x\cdot v-l(x,y,\omega)\}.
\end{align*}
By \cite[Theorem~14.50]{rw98}, the pointwise conjugate of a normal integrand is also a normal integrand. Clearly, $l$ is upper semicontinuous in the second argument since it is the pointwise infimum of continuous functions of $y$. Similarly, 
\[
\underline l(x,y,\omega) := \sup_{v\in\R^n}\{x\cdot v-f^*(v,y,\omega)\}
\]
is lower semicontinuous in the first argument. In fact, $\underline l(\cdot,y,\omega)$ is the biconjugate of $l(\cdot,y,\omega)$ while $-l(x,\cdot,\omega)$ is the biconjugate of $-\underline l(x,\cdot,\omega)$; see \cite[Theorem~34.2]{roc70a}. The function $(x,\omega)\mapsto \underline l(x,y(\omega),\omega)$ is a normal integrand for any $y\in\Y$ while $(y,\omega)\mapsto - l(x(\omega),y,\omega)$ is a normal integrand for any $x\in\N$ and thus, the integral functionals $E\underline l$ and $El$ are well-defined on $\N\times\Y$ (recall our convention of defining an integral as $+\infty$ unless the positive part of the integrand is integrable). Indeed, we have $l(x(\omega),y,\omega)=-h^*(y,\omega)$ for $h(u,\omega):=f(x(\omega),u,\omega)$, where  $h$ is a normal integrand, by \cite[Proposition~14.45(c)]{rw98}. Similarly for $\underline l$.

Restricting strategies to the space $\N^\infty\subset\N$ of essentially bounded strategies gives rise to the auxiliary value function
\[
\tilde\varphi(u) = \inf_{x\in\N^\infty}Ef(x,u).
\]
Under the conditions of Theorem~\ref{thm:duality} below, the conjugates of $\tilde\varphi$ and $\varphi$ coincide, or in other words, closures of $\tilde\varphi$ and $\varphi$ are equal. The following lemma from \cite{per14b} will play an important role. We denote
\[
\N^\perp := \{v\in L^1(\Omega,\F,\reals^n)\,|\,E(x\cdot v)=0\ \forall x\in\N^\infty\}.
\]
\begin{lemma}\label{lem:1}
Let $x\in\N$ and $v\in\N^\perp$. If $E[x\cdot v]^+\in L^1$, then $E(x\cdot v)=0$.
\end{lemma}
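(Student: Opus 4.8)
The plan is to make the constraint $v\in\N^\perp$ explicit and then reduce the multi-period statement to a single one-period fact applied conditionally along the filtration. Testing the defining relation of $\N^\perp$ against bounded adapted strategies supported at a single date shows that $v\in\N^\perp$ is equivalent to $v_t\in L^1$ and $E[v_t\mid\F_t]=0$ for every $t$; by the tower rule this also gives $E[v_t\mid\F_r]=0$ whenever $r\le t$. The difficulty to keep in mind throughout is that the hypothesis only controls the positive part of the \emph{whole} sum $x\cdot v=\sum_t x_t\cdot v_t$, not of the individual terms, so the periods cannot be treated separately: there may be heavy cancellation between the $x_t\cdot v_t$, and a naive termwise application of a one-period argument is unavailable.

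The key device I would introduce is the adapted process $U_r:=\sum_{t=0}^T x_t\cdot E[v_t\mid\F_r]$, $r=0,\dots,T$. Each $U_r$ is $\F_r$-measurable and almost surely finite, and $U_0=0$ since $E[v_t\mid\F_0]=0$ for all $t$. Conditioning $Z:=x\cdot v$ on $\F_T$ term by term is legitimate because each $x_t$ is $\F_T$-measurable and $v_t\in L^1$, so each $x_t\cdot v_t$ is conditionally integrable given $\F_T$; this yields $E[Z\mid\F_T]=U_T$. Hence $E(x\cdot v)=E[U_T]$ by the extended tower property (valid because $E[(x\cdot v)^+]<\infty$), while conditional Jensen gives $E[U_T^+]\le E[(x\cdot v)^+]<\infty$. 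The increment $\xi_r:=U_r-U_{r-1}=\sum_{t<r}x_t\cdot\bigl(E[v_t\mid\F_r]-E[v_t\mid\F_{r-1}]\bigr)$ (the terms $t\ge r$ vanish) has exactly the structure of a one-period term at date $r-1$: the multipliers $x_t$, $t<r$, are $\F_{r-1}$-measurable, while the factors $E[v_t\mid\F_r]-E[v_t\mid\F_{r-1}]$ are martingale differences with vanishing $\F_{r-1}$-conditional mean.

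I would therefore isolate a conditional one-period lemma: if $a$ is $\G$-measurable, $w\in L^1$ with $E[w\mid\G]=0$, and $E[(a\cdot w)^+\mid\G]<\infty$ a.s., then $E[a\cdot w\mid\G]=0$ a.s. This is proved by truncating $a^n:=a\one_{\{|a|\le n\}}$, noting $E[a^n\cdot w\mid\G]=a^n\cdot E[w\mid\G]=0$ with $a^n\cdot w\in L^1$, and letting $n\to\infty$ via conditional monotone convergence applied to $(a\cdot w)^\pm\one_{\{|a|\le n\}}$; the finiteness of $E[(a\cdot w)^+\mid\G]$ then forces the negative part to have the same finite conditional expectation, so the difference is zero.

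With this lemma the proof closes by a backward induction on $r$ from $T$ down to $1$. At step $r$, the hypothesis $E[U_r^+]<\infty$ gives $E[\xi_r^+\mid\F_{r-1}]\le E[U_r^+\mid\F_{r-1}]+U_{r-1}^-<\infty$ a.s. (here $U_{r-1}$ is $\F_{r-1}$-measurable), so the conditional one-period lemma applies with $\G=\F_{r-1}$ and yields $E[\xi_r\mid\F_{r-1}]=0$, i.e. $E[U_r\mid\F_{r-1}]=U_{r-1}$. Conditional Jensen then propagates the bound, $E[U_{r-1}^+]\le E[U_r^+]<\infty$, and the tower property gives $E[U_{r-1}]=E[U_r]$. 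Iterating down to $U_0=0$ yields $E(x\cdot v)=E[U_T]=\dots=E[U_0]=0$. The main obstacle is precisely the cross-period cancellation flagged above; the role of the process $U_r$ is to reorganize $x\cdot v$ into increments whose partial sums are adapted, so that conditioning on $\F_{r-1}$ keeps the running remainder measurable and lets the one-sided integrability be transported down the filtration by conditional Jensen.
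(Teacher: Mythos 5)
Your proof is correct. There is, however, nothing in the paper to compare it against: the paper states Lemma~\ref{lem:1} without proof, importing it from the reference \cite{per14b}, so your argument should be judged as a self-contained substitute, and as such it works. The two pillars are sound: testing the definition of $\N^\perp$ against bounded strategies supported at a single date does give the characterization $v\in\N^\perp$ iff $v_t\in L^1$ and $E[v_t\mid\F_t]=0$ for all $t$, whence $E[v_t\mid\F_r]=0$ for $r\le t$; this is exactly what makes $U_r=\sum_{t<r}x_t\cdot E[v_t\mid\F_r]$ both well defined and $\F_r$-measurable, and it is what collapses the increment $\xi_r=U_r-U_{r-1}$ into a genuine one-period object $a\cdot w$ with $a$ being the $\F_{r-1}$-measurable block $(x_t)_{t<r}$ and $w$ an integrable martingale difference. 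Your truncation proof of the conditional one-period lemma is the standard and correct way to handle the product of an $L^0$ factor with an $L^1$ martingale difference under one-sided conditional integrability, and the backward induction correctly transports the one-sided bound down the filtration via conditional Jensen, with the extended tower property applicable at each stage because the relevant positive parts have finite expectation. This reorganization into adapted partial sums is precisely what defuses the cross-period cancellation you flag; a naive termwise argument would indeed fail. One point to make explicit in a full write-up: the identities $E[x\cdot v\mid\F_T]=U_T$ and $E[U_r\mid\F_{r-1}]=U_{r-1}+E[\xi_r\mid\F_{r-1}]$ involve generalized conditional expectations, which are additive only on conditionally integrable summands; your argument does supply the needed conditional integrability (via $|x_t|\,E[|v_t|\mid\F_T]<\infty$ a.s., and via the a.s.\ finiteness of both $E[\xi_r^+\mid\F_{r-1}]$ and $E[\xi_r^-\mid\F_{r-1}]$ once the one-period lemma is invoked), so this is a matter of presentation rather than a gap.
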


We will use the notation
\begin{align*}
\dom_1 Ef &:=\{x\in\N\,|\, \exists u\in\U:\ Ef(x,u)<+\infty\}.
\end{align*}
Recall that {\em algebraic closure}, $\acl C$, of a set $C$ is the set of points $x$ such that $(x,z]\subset C$ for some $z\in C$. Clearly, $C\subseteq\acl C$ while in a topological vector space, $\acl C\subseteq\cl C$.

\begin{theorem}\label{thm:duality}
If $\dom El(\cdot,y)\cap \N^\infty\subseteq\acl(\dom_1Ef\cap\N^\infty)$, then
\[
\tilde\varphi^*(y) = -\inf_{x\in\N^\infty}El(x,y).
\]
If, for every $x\in\N^\infty$ with $x\in\cl\dom_1f$ almost surely, there exists $\bar x\in\N^\infty$ with $\bar x\in\rinterior\dom_1f$ almost surely and $(\bar x,x)\in\dom_1 Ef$, then
\[
\tilde\varphi^*(y) = -\inf_{x\in\N^\infty}E\underline l(x,y).
\]

We always have
\[
\tilde\varphi^*(y)\le\varphi^*(y)\le \inf_{v\in\N^\perp} Ef^*(v,y).
\]
In particular, if there exists $v\in\N^\perp$ such that $\tilde\varphi^*(y)=Ef^*(v,y)$, then
\[
\tilde\varphi^*(y)=\varphi^*(y).
\]
\end{theorem}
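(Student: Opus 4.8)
The plan is to first rewrite the conjugate of the restricted value function by a routine interchange of suprema. Since $-\inf=\sup(-\,\cdot\,)$,
\[
\tilde\varphi^*(y)=\sup_{u\in\U}\{\langle u,y\rangle-\inf_{x\in\N^\infty}Ef(x,u)\}=-\inf_{x\in\N^\infty}L(x,y),
\]
where $L(x,y)=\inf_{u\in\U}\{Ef(x,u)-\langle u,y\rangle\}$ is the Lagrangian introduced above, now restricted to $x\in\N^\infty$. Because $f(x(\omega),u(\omega),\omega)-u(\omega)\cdot y(\omega)\ge l(x(\omega),y(\omega),\omega)\ge\underline l(x(\omega),y(\omega),\omega)$ pointwise, monotonicity of the integral gives, for each fixed $x$, the chain $L(x,y)\ge El(x,y)\ge E\underline l(x,y)$; taking infima over $\N^\infty$ and negating yields the hypothesis-free ``easy'' inequalities $\tilde\varphi^*(y)\le-\inf_{x\in\N^\infty}El(x,y)\le-\inf_{x\in\N^\infty}E\underline l(x,y)$. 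The crucial tool for the reverse inequalities is the interchange rule for minimization and integration over a decomposable space \cite[Theorem~14.60]{rw98}, applied to the integrand $u\mapsto f(x(\omega),u,\omega)-u\cdot y(\omega)$; this is normal since $u\mapsto f(x(\omega),u,\omega)$ is normal by \cite[Proposition~14.45(c)]{rw98} and subtracting a Carath\'eodory function preserves normality. It gives $L(x,y)=El(x,y)$ exactly when $u\mapsto Ef(x,u)-\langle u,y\rangle$ is not identically $+\infty$ on $\U$, i.e.\ (using that $\langle u,y\rangle$ is finite for $u\in\U,\,y\in\Y$) exactly when $x\in\dom_1Ef$.

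For the first assertion I would then upgrade $L=El$ from $\dom_1Ef$ to all of $\dom El(\cdot,y)\cap\N^\infty$ via the algebraic-closure hypothesis. Fix $x\in\dom El(\cdot,y)\cap\N^\infty$; by hypothesis there is $z\in\dom_1Ef\cap\N^\infty$ with $x_\lambda:=(1-\lambda)x+\lambda z\in\dom_1Ef\cap\N^\infty$ for all $\lambda\in(0,1]$, whence $L(x_\lambda,y)=El(x_\lambda,y)$ by the interchange rule. Since $z\in\dom_1Ef$ forces $El(z,y)<+\infty$ and $El(x,y)<+\infty$, convexity of $x\mapsto El(x,y)$ gives $\limsup_{\lambda\downarrow0}El(x_\lambda,y)\le El(x,y)$, so $\inf_{x'\in\N^\infty}L(x',y)\le El(x,y)$ for every such $x$. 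Thus $\inf L\le\inf El$, and with the easy inequality this yields $\tilde\varphi^*(y)=-\inf_{x\in\N^\infty}El(x,y)$.

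For the second assertion the reverse inequality $\tilde\varphi^*(y)\ge-\inf_{x\in\N^\infty}E\underline l(x,y)$ comes from a relative-interior approximation. First note $\dom l(\cdot,y,\omega)=\dom_1f(\omega)$, so any $x\in\dom E\underline l(\cdot,y)\cap\N^\infty$ satisfies $\underline l(x,y,\omega)<+\infty$ a.s.\ and hence lies a.s.\ in $\cl\dom_1f$; the hypothesis then supplies $\bar x\in\N^\infty$ with $\bar x\in\rinterior\dom_1f$ a.s.\ and $(\bar x,x)\subset\dom_1Ef$. By the line-segment principle each $x_\lambda=(1-\lambda)x+\lambda\bar x$, $\lambda\in(0,1]$, lies a.s.\ in $\rinterior\dom_1f$, where $l(\cdot,y,\omega)$ agrees with its closure $\underline l(\cdot,y,\omega)$ \cite[Theorem~7.4]{roc70a}. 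Fixing one $\lambda_0\in(0,1)$ (so $x_{\lambda_0}\in\dom_1Ef$ has $E\underline l(x_{\lambda_0},y)=El(x_{\lambda_0},y)<+\infty$) and moving along the subsegment from $x$ to $x_{\lambda_0}$, convexity of $\underline l(\cdot,y,\omega)$ gives, for $x^\mu=(1-\mu)x+\mu x_{\lambda_0}\in\dom_1Ef\cap\N^\infty$ with $\mu\in(0,1)$,
\[
El(x^\mu,y)=E\underline l(x^\mu,y)\le(1-\mu)E\underline l(x,y)+\mu E\underline l(x_{\lambda_0},y).
\]
Since $L(x^\mu,y)=El(x^\mu,y)$ by the interchange rule, letting $\mu\downarrow0$ gives $\inf_{x'\in\N^\infty}L(x',y)\le E\underline l(x,y)$, hence $\inf L\le\inf E\underline l$, and with the easy inequality $\tilde\varphi^*(y)=-\inf_{x\in\N^\infty}E\underline l(x,y)$.

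Finally, the inequality chain and the squeeze: $\N^\infty\subseteq\N$ gives $\tilde\varphi\ge\varphi$, so $\tilde\varphi^*\le\varphi^*$ by order-reversal of conjugation. For $\varphi^*(y)=-\inf_{x\in\N}L(x,y)\le Ef^*(v,y)$ with $v\in\N^\perp$, I would fix $x\in\N$ with $L(x,y)<+\infty$ and $u\in\U$ with $Ef(x,u)<+\infty$ (otherwise the bound is trivial), assume $Ef^*(v,y)<+\infty$, and use Fenchel--Young $x\cdot v\le f(x,u,\cdot)+f^*(v,y,\cdot)-u\cdot y$ pointwise; the right side has integrable positive part (its summands do, since $Ef(x,u),Ef^*(v,y)<+\infty$ and $u\cdot y\in L^1$), so $[x\cdot v]^+\in L^1$ and Lemma~\ref{lem:1} gives $E(x\cdot v)=0$. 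Integrating Fenchel--Young then yields $Ef^*(v,y)\ge\langle u,y\rangle-Ef(x,u)$, and taking the infimum over $u$ gives $L(x,y)\ge-Ef^*(v,y)$, i.e.\ $\varphi^*(y)\le Ef^*(v,y)$; the infimum over $v\in\N^\perp$ closes the chain. The ``in particular'' claim is then immediate, since $\tilde\varphi^*(y)=Ef^*(v,y)$ for some $v\in\N^\perp$ collapses $\tilde\varphi^*(y)\le\varphi^*(y)\le\inf_{v'}Ef^*(v',y)\le Ef^*(v,y)=\tilde\varphi^*(y)$ to equalities. The main obstacle throughout is making rigorous the interchange identity $L=El$ together with the two boundary-approximation arguments, where the convexity hypotheses (algebraic closure, respectively relative interior via the line-segment principle) must be reconciled with the $+\infty$ integral convention so that the limiting inequalities survive passage to the integral.
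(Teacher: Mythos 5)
Your proof is correct and follows essentially the same route as the paper's: the interchange rule \cite[Theorem~14.60]{rw98} giving $L(x,y)=El(x,y)$ on $\dom_1Ef$, a line-segment argument from the algebraic-closure hypothesis for the first claim, the relative-interior line-segment principle combined with the agreement of $l(\cdot,y,\omega)$ with its closure $\underline l(\cdot,y,\omega)$ on $\rinterior\dom_1f$ for the second, and Fenchel--Young together with Lemma~\ref{lem:1} and the squeeze for the remaining inequalities. The only differences are cosmetic: you run the second segment argument directly on $E\underline l$ (noting $\dom l(\cdot,y,\omega)=\dom_1f(\omega)$ outright), whereas the paper first observes that the second hypothesis implies the first domain condition, restricts the infimum of $El$ to strategies lying in $\rinterior\dom_1f$, and then replaces $l$ by $\underline l$ via \cite[Theorem~34.2]{roc70a}.
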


\begin{proof}
By the interchange rule (\cite[Theorem~14.60]{rw98}), $L(x,y) = El(x,y)$ for $x\in\dom_1Ef$ and thus,
\begin{align*}
-\tilde\varphi^*(y) &= \inf_{x\in\N^\infty}L(x,y) = \inf_{x\in\N^\infty\cap\dom_1Ef}L(x,y)\\
&= \inf_{x\in\N^\infty\cap\dom_1Ef}El(x,y)\ge \inf_{x\in\N^\infty}El(x,y).
\end{align*} 
The converse holds trivially if $\dom El(\cdot,y)\cap\N^\infty\ne \emptyset$. Otherwise, let $a\in\reals$ and $\tilde x\in\N^\infty$ be such that $El(\tilde x,y)< a$.  By the first assumption, there exists an $x'\in \dom_1Ef\cap\N^\infty$ such that $(1-\lambda)\tilde x+\lambda x'\in\dom_1Ef$ for all $\lambda\in(0,1]$. By convexity, $L((1-\lambda)\tilde x+\lambda x',y)=El((1-\lambda)\tilde x+\lambda x',y)<a$ for $\lambda$ small enough and thus
\[
-\tilde\varphi^*(y)=\inf_{x\in\N^\infty} El(x,y).
\] 

To prove the second claim, let $y\in\Y$ and $x\in\dom El(\cdot,y)\cap\N^\infty$. By \cite[Theorem 34.3]{roc70a}, $x\in\cl\dom_1 f$ almost surely so, by assumption, there exists $\bar x\in\N^\infty$ with $\bar x\in\rinterior\dom_1f$ almost surely and $(x,\bar x)\in\dom_1 Ef$. Thus, $x\in\acl\dom_1 Ef$ so the first domain condition is satisfied. By \cite[Theorem 6.1]{roc70a}, $(x,\bar x)\subset\rinterior\dom_1 f$, and thus by the same line segment argument as above, the infimum in 
\[
-\tilde\varphi^*(y) = \inf_{x\in\N^\infty} El(x,y) 
\]
can be restricted to those $x$ for which $x\in\rinterior\dom_1 f$. Then, by \cite[Theorem 34.2]{roc70a}, we may replace $l$ by $\underline l$ without affecting the infimum.

As to the last claim, the Fenchel inequality gives 
\begin{equation*}
f(x,u)+f^*(v,y)\ge x\cdot v+u\cdot y.
\end{equation*}
Therefore, for $(x,u)\in\dom Ef$ and $v\in\N^\perp$ with $Ef^*(v,y)<\infty$, we have $E(x\cdot v)=0$ by Lemma~\ref{lem:1}, so we get
\[
\varphi^*(y)=\sup_{x\in\N,u\in\U}E[u\cdot y-f(x,u)]\le \inf_{v\in\N^\perp}Ef^*(v,y).
\]
Since $\tilde\varphi\ge\varphi$, have $\tilde\varphi^*\le \varphi^*$. This completes the proof of the inequalities. The last statement concerning the equality clearly follows from the inequalities. 

\end{proof}

Theorem~\ref{thm:duality} gives conditions under which the conjugate of the value function of \eqref{p} can be expressed as
\[
\varphi^*(y) = \inf_{x\in\N^\infty}El(x,y).
\]
The second part gives conditions that allow one to replace $l$ by $\underline l$ in the above expression. One can then resort to the theory of normal integrands when calculating the infimum. It was shown in \cite{pen11c} that this yields many well-known dual expressions in operations research and mathematical finance. Unfortunately, the proof of the above expression in \cite[Theorem~2.2]{pen11c} was incorrect and the given conditions are not sufficient in general; see \cite{pen15}. The following example illustrates what can go wrong if the condition on the domains is omitted. Here and in what follows, $\delta_C$ denotes the indicator function of a set $C$: $\delta_C(x)$ equals $0$ or $+\infty$ depending on whether $x\in C$ or not.

\begin{example}\label{ex:ie0}
Let $\F_0$ be trivial, $n_0=1$, $\U=L^\infty$, $\Y=L^1$, $\beta\in L^2$ be such that $\beta^+$ and $\beta^-$ are unbounded, and let
\[
f(x,u,\omega)= \delta_{\reals_-}(\beta(\omega)x_0+u)
\]
so that $\dom Ef=\{x\in\N\mid x_0 = 0\}\times L^\infty_-$, $\tilde\varphi^*=\delta_{L^1_+}$ and
\[
l(x,y,\omega)=y\beta(\omega)x_0-\delta_{\reals_+}(y).
\]
For $y=\beta^+$, we get $\inf_{x\in\N^\infty}El(x,y)=-\infty$ while $\tilde\varphi^*(y)=0$. Here $\dom El(\cdot,y)=\N$, so the first condition in Theorem~\ref{thm:duality} is violated.
\end{example}

The following example shows how the equality $\varphi^*=\tilde\varphi^*$ may fail to hold even when the first condition of Theorem~\ref{thm:duality} is satisfied.

\begin{example}
Let $T=1$, $n=2$, $\F_0$ is the completed trivial $\sigma$-algebra, and
\[
f(x,u,\omega)=|x_0-1|+\delta_{\reals_-}(\alpha(\omega) |x_0|-x_1) + \frac{1}{2}|u|^2,
\]
where $\alpha\in L^0(\F_1)$ is positive and unbounded. It is easily checked that with $\U=\Y=L^2$ the first condition of Theorem~\ref{thm:duality} holds but $\tilde\varphi(u)=1+\frac{1}{2}\|u\|^2$ and $\varphi(u)=\frac{1}{2}\|u\|^2$.
\end{example}

At the moment, it is an open question whether the algebraic closure in the domain condition could be replaced by a topological closure. This can be done, however, e.g.\ if $El(\cdot,y)$ is upper semicontinuous on the closure of $\dom_1Ef\cap\N^\infty$. Also, in the deterministic case (where $P$ is integer-valued on $\F$), the first condition is automatically satisfied since $\dom l(\cdot,y)\subset\cl\dom_1f$ for all $y$, by \cite[Theorem~34.3]{roc70a}.

The following describes a general situation where the assumptions of Theorem~\ref{thm:duality} are satisfied.

\begin{example}\label{ex:ie}
Consider the problem
\begin{alignat*}{2}
&\minimize\quad& E&f_0(x)\quad\text{{\rm over} $x\in\N$}\\
&\st\quad& &f_j(x)\le 0\ P\text{-a.s.},\ j=1,\ldots,m,
\end{alignat*}
where $f_j$ are normal integrands. The problem fits the general framework with 
\begin{align*}
f(x,u,\omega) &=
\begin{cases}
f_0(x,\omega) & \text{if $f_j(x,\omega)+u_j\le 0$ for $j=1,\ldots,m$},\\
+\infty & \text{otherwise}.
\end{cases}
\end{align*}
This model was studied in Rockafellar and Wets~\cite{rw78} who gave optimality conditions in terms of dual variables. We will return to optimality conditions in the next section. For now, we note that if $f_j(x)\in\U$ for all $x\in\N^\infty$ and $j=0,\ldots,m$, then the assumptions of Theorem~\ref{thm:duality} are satisfied. 

Indeed, $\dom_1 Ef\cap\N^\infty=\N^\infty$ so the first two conditions in Theorem~\ref{thm:duality} hold. The lagrangian integrand can now be written as
\[
l(x,y,\omega) = f_0(x,\omega) + \sum_{j=1}^my_jf_j(x,\omega).
\]
By \cite[Theorem~22]{roc74}, the function $El(\cdot,y)$ is Mackey-continuous (with respect to the usual pairing of $L^\infty$ and $L^1$) at the origin of $L^\infty$ for every $y\in\Y$ so, by convexity, the function
\[
\phi_y(\tilde x) := \inf_{x\in\N^\infty}El(x+\tilde x,y),
\]
is Mackey-continuous as well; see e.g.~\cite[Theorem~8]{roc74}. By \cite[Theorem~11]{roc74}, this implies that $\phi_y$ has a subgradient at the origin, i.e.\ a $v\in L^1$ such that
\[
El(x+\tilde x,y) \ge \phi_y(0) + E(\tilde x\cdot v)\qquad \forall \tilde x\in L^\infty,\ x\in \N^\infty
\]
or equivalently 
\[
El(\tilde x,y) \ge \phi_y(0) + E((\tilde x-x)\cdot v)\qquad \forall \tilde x\in L^\infty,\ x\in \N^\infty.
\]
This implies $v\in \N^\perp$ and $\inf_{\tilde x\in L^\infty} E[l(\tilde x,y)-\tilde x\cdot v]=\phi_y(0)$. By the interchange rule (\cite[Theorem~14.60]{rw98}), this can be written as $-Ef^*(v,y)=\phi_y(0)=-\tilde\varphi^*(y)$, so the last condition of Theorem~\ref{thm:duality} holds.
\end{example}

Section~4 of \cite{pen11} studied financial models for pricing and hedging of portfolio-valued contingent claims (claims with physical delivery) along the lines of Kabanov~\cite{kab99}; see Example~\ref{ex:kabanov} below. The following example is concerned with the more classical financial model with claims with cash-delivery.

\begin{example}\label{ex:optinv}
Consider the problem
\begin{equation}\label{alm}\tag{ALM}
\minimize\quad E  V\left(u-\sum_{t=0}^{T-1} x_t\cdot\Delta s_{t+1}\right)\quad\ovr\quad x\in\N,
\end{equation}
where $V$ convex normal integrand on $\reals\times\Omega$ such that $V(\cdot,\omega)$ is nondecreasing nonconstant and $V(0,\omega)=0$. This models the optimal investment problem of an agent with financial liabilities $u\in\U$ and a ``disutility function'' $V$. The $\F_t$-measurable vector $s_t$ gives the unit prices of ``risky'' assets at time $t$ and the vector $x_t$ the units held over $(t,t+1]$; see e.g. R{\'a}sonyi and Stettner~\cite{rs5} and the references therein.

Assume that, for every $x\in\N^\infty$, there is a $u\in\U$ such that $EV(u-\sum_{t=0}^{T-1} x_t\cdot\Delta s_{t+1})<\infty$, then the closure of the value function $\varphi$ of \eqref{alm} has the dual representation
\begin{align*}
(\cl\varphi)(u)=\sup_{y\in\Q}E[uy - V^*(y)],
\end{align*}
where $\Q$ is the set of positive multiples of martingale densities $y\in\Y$, i.e.\ densities $dQ/dP$ of probability measures $Q\ll P$ under which the price process $s$ is a martingale.

Indeed, \eqref{alm} fits the general model \eqref{p} with
\begin{align*}
f(x,u,\omega) &= V\left(u-\sum_{t=0}^{T-1} x_t\cdot\Delta s_{t+1}(\omega),\omega\right),\\
l(x,y,\omega) &=-V^*(y,\omega)-y\sum_{t=0}^{T-1}x_t\cdot\Delta s_{t+1}(\omega),\\
f^*(v,y,\omega) &=
\begin{cases}
V^*(y,\omega) & \text{if $v_t=-y\Delta s_{t+1}(\omega)$ for $t<T$ and $v_T=0$},\\
+\infty & \text{otherwise}.
\end{cases}
\end{align*}
The integrability condition implies $\dom_1 Ef\cap \N^\infty=\N^\infty$ so the first two conditions of Theorem~\ref{thm:duality} hold. Thus,
\begin{align*}
-\tilde\varphi^*(y)=\inf_{x\in\N^\infty}El(x,y) = -EV^*(y) + \inf_{x\in\N^\infty}E\left[-\sum_{t=0}^{T-1}x_t\cdot(y\Delta s_{t+1})\right].
\end{align*}
By Fenchel inequality,
\[
uy-\sum_{t=0}^{T-1}x_t\cdot(y\Delta s_{t+1}) \le V\left(u-\sum_{t=0}^{T-1} x_t\cdot\Delta s_{t+1}\right) + V^*(y)\quad P\text{-a.s.}
\]
where for every $x\in\N^\infty$ and $y\in\dom EV^*$, the right side is integrable for some $u\in\U$. Thus, $-\sum_{t=0}^{T-1}x_t\cdot(y\Delta s_{t+1})$ is integrable for every $x\in\N^\infty$ and $y\in\dom EV^*$. Therefore the last infimum equals $-\infty$ unless $E_t(y\Delta s_{t+1})=0$ for every $t$, i.e.\ unless $y\in\Q$. Moreover, if $y\in\dom\tilde\varphi^*$ then $Ef^*(v,y)=\tilde\varphi^*(y)$ holds with $v_t=y\Delta s_{t+1}$ for $t<T$ and $v_T=0$, so the last condition of Theorem~\ref{thm:duality} holds.
\end{example}

The following example addresses a parameterized version of the generalized problem of Bolza studied in Rockafellar and Wets~\cite{rw83}.

\begin{example}\label{ex:bolza}
Consider the problem
\begin{equation}\label{bolza}
\minimize_{x\in\N}\quad E\sum_{t=0}^TK_t(x_t,\Delta x_t+u_t),
\end{equation}
where $n_t=d$, $\Delta x_t:=x_t-x_{t-1}$, $x_{-1}:=0$ and each $K_t$ is an $\F_t$-measurable normal integrand on $\reals^d\times\reals^d\times\Omega$. 

We assume that, for every $x\in\N^\infty$ with $x_t\in\cl\dom_1 K_t$ for all $t$, there exists $\bar x\in\N^\infty$ with $\bar x_t\in\rinterior \dom_1 K_t$ for all $t$ and $(\bar x,x)\in\dom_1 Ef$, and that $^a u\in\U$ and $^a y\in\Y$ for every $u\in\U$ and $y\in\Y$, where $^au_t=E_t u_t$. Then the closure of the value function $\varphi$ of \eqref{bolza} has the dual representation
\[
(\cl\varphi)(u) = \sup_{y\in\Y\cap \N} E\sum_{t=0}^T[u_t\cdot y_t-K_t^*(E_t\Delta y_{t+1},y_t)]
\]
for every adapted $u\in\U$. 

Indeed, the problem fits our general framework with
\begin{align*}
f(x,u,\omega) &= \sum_{t=0}^TK_t(x_t,\Delta x_t+u_t,\omega),\\
l(x,y,\omega) &= \sum_{t=0}^T\left[-x_t\cdot\Delta y_{t+1}+H_t(x_t,y_t,\omega)\right],\\
f^*(v,y,\omega)&= \sum_{t=0}^T K_t^*(v_t+\Delta y_{t+1},y_t),
\end{align*}
where $u=(u_0,\ldots,u_T)$ with $u_t\in\reals^d$ and
\[
H_t(x_t,y_t,\omega) := \inf_{u_t\in\R^d}\{K_t(x_t,u_t,\omega)-u_t\cdot y_t\}
\]
is the associated {\em Hamiltonian}.


The domain condition in Theorem~\ref{thm:duality} is satisfied, so
\[
-\tilde\varphi^*(y)=\inf_{x\in\N^\infty}E\underline l(x,y) =  \inf_{x\in\N^\infty} E\sum_{t=0}^T\left[-x_t\cdot\Delta y_{t+1}+\bunderline H_t(x_t,y_t)\right],
\]
where $\bunderline H_t(x_t,y_t,\omega)=\sup_{v_t}\{v_t\cdot x_t -K_t^*(v_t,y_t,\omega)\}$. Thus, by the interchange rule \cite[Theorem 14.50]{rw98}, 
\begin{align*}
-\tilde\varphi^*(y) &= -\inf_{x\in\N^\infty} E\sum_{t=0}^T\left[-x_t\cdot E_t\Delta y_{t+1}+\bunderline H_t(x_t,y_t,\omega)\right]\\
&= -E\sum_{t=0}^TK_t^*(E_t\Delta y_{t+1},y_t)
\end{align*}
for adapted $y$. Moreover, with $v_t:=E_t\Delta y_{t+1}-\Delta y_{t+1}$ we get $\tilde\varphi^*(y)=Ef^*(v,y)$, where $v\in\N^\perp$. The last condition in Theorem~\ref{thm:duality} thus holds. For any $y$, Jensen's inequality gives
\begin{align*}
-\tilde\varphi^*(y)&= \inf_{x\in\N^\infty}El(x,y)\\
&\le \inf_{x\in\N^\infty}E\sum_{t=0}^T[-x_t\cdot E_t\Delta y_{t+1} + H_t(x_t,E_ty_t)]\\
&=-\tilde\varphi^*({^a}y).
\end{align*}
Therefore, for adapted $u$, we get
\[
\cl\varphi(u)=\sup_{y\in\Y\cap \N} E\sum_{t=0}^T[u_t\cdot y_t-K_t^*(E_t\Delta y_{t+1},y_t)].
\]
\end{example}

The dual representation of the value function in Example~\ref{ex:bolza} was claimed to hold in \cite{pen11} under the assumption that the Hamiltonian is lsc in $x$. The claim is, however, false in general since it omitted the domain condition in Theorem~\ref{thm:duality}. The integrability condition posed in Example~\ref{ex:bolza} not only provides a sufficient condition for that, but it also makes the lower semicontinuity of the Hamiltonian a redundant assumption.

\section{Optimality conditions}\label{sec:oc}

The previous section as well as the articles \cite{pen11c,pp12} were concerned with dual representations of the value function $\varphi$. Continuing in the general conjugate duality framework of Rockafellar~\cite{roc74}, this section derives optimality conditions for \eqref{p} by assuming the existence of a subgradient of $\varphi$ at $u$. Besides optimality conditions, this assumption implies the lower semicontinuity of $\varphi$ at $u$ (with respect to the weak topology induced on $\U$ by $\Y$) and thus, the absence of a duality gap as well. Whereas in the above reference, the topology of convergence in measure in $\N$ played an important role, below, topologies on $\N$ are irrelevant.

Recall that a $y\in\Y$ is a {\em subgradient} of $\varphi$ at $u\in\U$ if
\[
\varphi(u')\ge\varphi(u) + \langle u'-u,y\rangle\quad\forall u'\in\U.
\]
The set of all such $y$ is called the {\em subdifferential} of $\varphi$ at $u$ is and it is denoted by $\partial\varphi(u)$. If $\partial\varphi(u)\ne\emptyset$, then $\varphi$ is lower semicontinuous at $u$ and
\[
\varphi(u)=\langle u,y\rangle -\varphi^*(y)
\]
for every $y\in\partial\varphi(u)$. By \cite[Theorem~11]{roc74}, $\partial\varphi(u)\ne\emptyset$, in particular, when $\varphi$ is continuous at $u$.

We assume from now on that $Ef$ is closed in $u$ and that $\varphi$ is proper.

\begin{theorem}\label{thm:sp}
Assume that $\partial\varphi(u)\ne\emptyset$ and that for every $y\in\partial\varphi(u)$ there exists $v\in\N^\perp$ such that $\varphi^*(y)=Ef^*(v,y)$. Then an $x\in\N$ solves \eqref{p} if and only if it is feasible and there exist $y\in\Y$ and $v\in\N^\perp$ such that 
\[
(v,y)\in\partial f(x,u)
\]
$P$-almost surely, or equivalently, if
\begin{align*}
v\in\partial_x l(x,y)\quad\text{and}\quad u\in\partial_y[-l](x,y)
\end{align*}
$P$-almost surely.
\end{theorem}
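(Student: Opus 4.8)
The plan is to reduce both implications to the single pointwise statement that the Fenchel equality $f(x,u)+f^*(v,y)=x\cdot v+u\cdot y$ holds $P$-almost surely, which is precisely $(v,y)\in\partial f(x,u)$. The equivalence of this with the pair $v\in\partial_x l(x,y)$, $u\in\partial_y[-l](x,y)$ is a purely pointwise fact of convex analysis: since $-l(x,\cdot)$ is the conjugate of $u\mapsto f(x,u)$ and $f^*(\cdot,y)$ is the conjugate of $x\mapsto l(x,y)$, the joint subgradient of $f$ decomposes into the two partial (saddle) subgradients of the Lagrangian, and I would invoke the conjugacy correspondence for convex bifunctions and their Lagrangians (e.g.\ \cite[\S37]{roc70a}), taking care with the distinction between $l(\cdot,y)$ and its lsc hull $\underline l(\cdot,y)$. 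Throughout I would use that, by properness of $\varphi$, every value $Ef(x',u)$ lies in $(-\infty,+\infty]$, so feasibility of a strategy is equivalent to $Ef(x',u)\in\reals$.

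For the sufficiency direction, suppose $x$ is feasible and $(v,y)\in\partial f(x,u)$ holds $P$-a.s.\ with $v\in\N^\perp$. Subtracting the Fenchel equality at $(x,u)$ from the Fenchel inequality $f(x',u)+f^*(v,y)\ge x'\cdot v+u\cdot y$ applied at an arbitrary $x'\in\N$ gives $f(x',u)-f(x,u)\ge(x'-x)\cdot v$ $P$-a.s. If $Ef(x',u)=+\infty$ there is nothing to prove, so assume $x'$ is feasible; then the right-hand side has integrable positive part (dominated by $[f(x',u)-f(x,u)]^+$), and since $x'-x\in\N$ and $v\in\N^\perp$, Lemma~\ref{lem:1} yields $E[(x'-x)\cdot v]=0$. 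Integrating the inequality then gives $Ef(x',u)\ge Ef(x,u)$, so $x$ solves \eqref{p}. I note that this direction uses neither $\partial\varphi(u)\ne\emptyset$ nor the representation of $\varphi^*$.

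For the necessity direction, suppose $x$ solves \eqref{p}, so $Ef(x,u)=\varphi(u)\in\reals$. Choosing $y\in\partial\varphi(u)$ gives $\varphi(u)=\langle u,y\rangle-\varphi^*(y)$, and the hypothesis supplies $v\in\N^\perp$ with $\varphi^*(y)=Ef^*(v,y)$. I would then consider $\Phi:=f(x,u)-u\cdot y+f^*(v,y)$, which by the Fenchel inequality satisfies $\Phi\ge x\cdot v$ $P$-a.s.; all three summands are integrable, and $E\Phi=\varphi(u)-\langle u,y\rangle+\varphi^*(y)=0$. Hence $[x\cdot v]^+\le\Phi^+\in L^1$, so Lemma~\ref{lem:1} gives $E(x\cdot v)=0$; combined with $\Phi\ge x\cdot v$ and $E\Phi=0$ this forces $\Phi=x\cdot v$ $P$-a.s., that is, the Fenchel equality, i.e.\ $(v,y)\in\partial f(x,u)$ almost surely.

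I expect the main obstacle to be the integrability bookkeeping under the convention that an expectation is $+\infty$ unless its positive part is integrable: at each application of Lemma~\ref{lem:1} one must first certify that the relevant positive part is integrable, and one must then pass correctly from an integral identity back to a pointwise almost-sure identity. A secondary delicate point is the pointwise equivalence of the two stated forms of the condition, where the gap between $l(\cdot,y)$ and its closure $\underline l(\cdot,y)$ must be reconciled so that the saddle-subgradient pair corresponds exactly to the joint subgradient of $f$.
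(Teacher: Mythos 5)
Your proof is correct and follows essentially the same route as the paper's: both reduce optimality to the almost-sure Fenchel equality $(v,y)\in\partial f(x,u)$ via the identities $\varphi(u)+\varphi^*(y)=\langle u,y\rangle$ and $\varphi^*(y)=Ef^*(v,y)$, both use Lemma~\ref{lem:1} (with the same integrability bookkeeping) to kill the term $E(x\cdot v)$, and both appeal to \cite[Theorem~37.5]{roc70a} for the equivalence with the saddle conditions on the Lagrangian integrand. The one point where you improve on the paper is the sufficiency direction: the paper's proof is an ``iff'' chain carried out only for the particular pair $(v,y)$ supplied by the hypotheses, whereas your subtraction argument shows that \emph{any} $y\in\Y$ and $v\in\N^\perp$ with $(v,y)\in\partial f(x,u)$ almost surely certifies optimality of a feasible $x$, using neither $\partial\varphi(u)\ne\emptyset$ nor the representation of $\varphi^*$ --- which is exactly what the existential quantifier in the statement asks for, and makes explicit that the hypotheses are needed only for necessity.
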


\begin{proof}
Note first that if $y\in\partial\varphi(u)$ and $v\in\N^\perp$ such that $\varphi^*(y)=Ef^*(v,y)$, then $\varphi(u)+\varphi^*(y)=\langle u,y\rangle$ and
\[
\inf_{x\in\N} Ef(x,u)=\langle u,y\rangle-\varphi^*(y)=E[u\cdot y -f^*(y,v)].
\]
Thus, $x$ solves \eqref{p} if and only if
\[
E[f(x,u)+f^*(v,y)]= E[u\cdot y].
\]
By Fenchel's inequality,
\[
f(x,u)+f^*(v,y)\ge x\cdot v + u\cdot y,
\]
so, by Lemma~\ref{lem:1}, $E[x\cdot v]=0$ for every feasible $x$. Thus, $x$ solves \eqref{p} if and only if $x$ is feasible and the above inequality holds as an equality $P$-almost surely, that is, if
\[
(v,y)\in\partial f(x,u)
\]
$P$-almost surely. By \cite[Theorem 37.5]{roc70a}, this is equivalent to  
\begin{align*}
v\in\partial_x l(x,y)\quad\text{and}\quad u\in\partial_y[-l](x,y)
\end{align*}
$P$-almost surely.


\end{proof}

\begin{example}\label{ex:ie2}
In Example~\ref{ex:ie}, the optimality conditions of Theorem~\ref{thm:sp} mean that
\begin{align*}
f_j(x) + u_j &\le 0,\\
x \in\argmin_{z\in\reals^n}\{f_0(z)&+\sum_{j=1}^my_jf_j(z) - z\cdot v\},\\
y_jf_j(z) &=0\quad j=1,\ldots,m,\\
y_j&\ge 0
\end{align*}
$P$-almost surely. These are the optimality conditions derived in \cite{rw78}, where sufficient conditions were given for the existence of an optimal $x\in\N^\infty$ and the corresponding dual variables $y\in\Y$ and $v\in\N^\perp$ (in our notation); see \cite[Theorem~1]{rw78}. The conditions of \cite{rw78} imply the continuity of the optimum value at the origin with respect to the $L^\infty$-norm. This yields the existence of dual variables in the norm dual $(L^\infty)^*$. They then used the condition of ``relatively complete recourse'' to show that the projections of the dual variables to the subspace $L^1\subset(L^\infty)^*$ satisfy the optimality conditions as well.
\end{example}

We will now describe another general setup which covers many interesting applications and where the subdifferentiability condition $\partial\varphi(u)\ne\emptyset$ in Theorem~\ref{thm:sp} is satisfied. This framework is motivated by Biagini~\cite{bia8}, where similar arguments were applied to optimal investment in the continuous-time setting. The idea is simply to require stronger continuity properties on $\varphi$ in order to get the existence of dual variables directly in $\Y$ (without going through the more exotic space $(L^\infty)^*$ first as in \cite{rw78}).

A topological vector space is said to be {\em barreled} if every closed convex absorbing set is a neighborhood of the origin. By \cite[Corollary~8B]{roc74}, a lower semicontinuous convex function on a barreled space is continuous throughout the algebraic interior (core) of its domain. On the other hand, by \cite[Theorem~11]{roc74}, continuity implies subdifferentiability. Fr\'echet spaces and, in particular, Banach spaces are barreled. In the following, we say that $\U$ is {\em barreled} if it is barreled with respect to a topology compatible with the pairing with $\Y$.

The following applies Theorem~\ref{thm:sp} to optimal investment in perfectly liquid financial markets.

\begin{example}\label{ex:optinv2}
Consider Example~\ref{ex:optinv} and assume that $\U$ is barreled, $EV$ is finite on $\U$ and $EV^*$ is proper in $\Y$. 
Then an $x\in\N$ solves \eqref{alm} if and only if it is feasible and there exists $y\in\Q$ such that 
\begin{align*}
y&\in\partial V(u-\sum_{t=0}^{T-1}x_t\cdot\Delta s_{t+1})\quad P\text{-a.s.}
\end{align*}
\end{example}

\begin{proof}
By \cite[Theorem~21]{roc74}, $EV$ is lower semicontinuous so by \cite[Corollary~8B]{roc74}, it is continuous. Since
\[
\varphi(u) \le Ef(0,u) = EV(u),
\]
\cite[Theorem~8]{roc74} implies that $\varphi$ is continuous and thus subdifferentiable throughout $\U$, by \cite[Theorem~11]{roc74}.
Moreover, $\varphi^*(y)=Ef^*(v,y)$ for every $y\in\dom\varphi^*$ and $v\in\N^\perp$ given by $v_t=y\Delta s_{t+1}$. The assumptions of Theorem~\ref{thm:sp} are thus satisfied. The subdifferential conditions for the Lagrangian integrand $l$ can now be written as
\begin{align*}
v_t &= -y\Delta s_{t+1}\quad P\text{-a.s. for $t< T$ and $v_T=0$},\\
u-\sum_{t=0}^{T-1}x_t\cdot\Delta s_{t+1} &\in \partial V^*(y)\quad P\text{-a.s.}
\end{align*}
Since $v\in\N^\perp$, the former means that $y\in\Q$ while the latter can be written in terms of $\partial V$ as stated; see \cite[Theorem~12]{roc74}.
\end{proof}

The optimality conditions in Example~\ref{ex:optinv2} are classical in financial mathematics; for continuous-time models, see e.g.\ Schachermayer~\cite{sch1} or Biagini and Frittelli~\cite{bf8} and the references therein.

The next example applies Theorem~\ref{thm:sp} to the problem of Bolza from Example~\ref{ex:bolza}. It constructs a subgradient $y\in\partial\varphi(u)$ using Jensen's inequality. 


\begin{example}\label{ex:bolza2}
Consider Example~\ref{ex:bolza} and assume that $\U$ is barreled and that there exists a normal integrand $\theta$ such that $E\theta$ is finite on $\U$, $E\theta^*$ is proper on $\Y$, and, for every $u\in\U\cap\N$,
\[
\sum_{t=0}^T K_t(x_t,\Delta x_t+u_t)\le\theta(u) \quad P\text{-a.s.}
\]
for some $x\in\N$. Then an $x\in\N$ solves 
\[
\minimize_{x\in\N}\quad E\sum_{t=0}^TK_t(x_t,\Delta x_t+u_t)
\]
for a $u\in\U\cap\N$ if and only if $x$ is feasible and there exists $y\in \Y\cap\N$ such that 
\[
(E_t\Delta y_{t+1},y_t)\in\partial K_t(x_t,\Delta x_t+u_t)
\]
$P$-almost surely for all $t$, or equivalently, if
\begin{align*}
E_t\Delta y_{t+1} &\in \partial_{x} H_t(x_t,y_t),\\
u_t+\Delta x_t &\in \partial_{y}[-H_t](x_t,y_t)
\end{align*}
$P$-almost surely for all $t$.
\end{example}

\begin{proof}
The space $\U\cap\N$ is a barreled space and its continuous dual may be identified with $\Y\cap\N$. Indeed, by Hahn--Banach, any $y\in(\U\cap\N)^*$ may be extended to a $\bar y\in\Y$ for which $^a\bar y\in\Y\cap\N$ coincides with $y$ on $\U\cap\N$. Moreover, for any closed convex absorbing set $B$ in $\U\cap\N$, we have that $\hat B=\{u\in\U\mid {^a u}\in B \}$ is a closed convex absorbing set in $\U$, so it is a neighborhood of the origin of $\U$ which implies that $\hat B\cap\N\subset B$ is a neighborhood of the origin of $\U\cap\N$.

By assumption, for every $u\in\U\cap \N$ there is an $x\in\N$ such that
\begin{align*}
\varphi(u) &\le  E\sum_{t=0}^T K_t(x_t,\Delta x_t+u_t) \le E\theta(u).
\end{align*}
By \cite[Theorem~21]{roc74}, $E\theta$ is lower semicontinuous so by \cite[Corollary~8B]{roc74}, it is continuous. Thus, $\varphi$ is continuous and in particular, subdifferentiable on $\U\cap\N$ (see \cite[Theorem~11]{roc74}) so for every $u\in\U\cap\N$ there is a $y\in \Y\cap\N$ such that 
\[
\varphi(u')\ge \varphi(u)+\langle u'-u,y\rangle\quad\forall u'\in\U\cap\N. 
\]
Since each $K_t$ is $\F_t$-measurable, Jensen's inequality gives
\[
\varphi(u')\ge\varphi({^a u'})\ge \varphi(u)+\langle ^au'-u, y\rangle=\varphi(u)+\langle u'-u, y\rangle\quad\forall\ u'\in\U,
\]
so $y\in\partial\varphi(u)$ as well. Moreover, as observed in Example~\ref{ex:bolza}, we have $Ef^*(v,y)=\varphi^*(y)$ for $v_t= E_t\Delta y_{t+1}-\Delta y_{t+1}$. The subdifferential conditions in Theorem~\ref{thm:sp} for the Lagrangian integrand $l$ become
\begin{align*}
v_t +\Delta y_{t+1}&\in \partial_{x} H_t(x_t,y_t)\ P\text{-a.s.}\quad\forall t,\\
u_t+\Delta x_t &\in \partial_{y}[-H_t](x_t,y_t)\ P\text{-a.s.}\quad\forall t.
\end{align*}
By \cite[Theorem 37.5]{roc70a}, these are equivalent to the conditions in terms of $K_t$.
\end{proof}

The optimality condition in terms of $K_t$ in Example~\ref{ex:bolza2} can be viewed as a {\em stochastic Euler-Lagrange condition} in discrete-time much like that in \cite[Theorem~4]{rw83}. There is a difference, however, in that \cite{rw83} studied the problem of minimizing $E\sum K_t(x_{t-1},\Delta x_t)$ and, accordingly, the measurability conditions posed on the dual variables were different as well. The condition in terms of $H_t$ can be viewed as a {\em stochastic Hamiltonian system} in discrete-time; see \cite[Section~9]{roc70b} for deterministic models in continuous-time. Our assumptions on the problem also differ from those made in \cite{rw83}. Whereas the assumptions of \cite{rw83} and the line of argument follows that in \cite{rw78} (see Example~\ref{ex:ie2}), our assumption implies the continuity of $\varphi$ on the adapted subspace $\U\cap\N$.

It is essential that the growth condition in Example~\ref{ex:bolza2} is required only for adapted $u\in\U£$. Indeed, since $x$ is adapted, it would often be too much to ask the upper bound for nonadapted $u$. This is the case e.g.\ in the following example which is concerned with the financial model studied in \cite{pp10,pp12}. The model is an extension of the currency market model introduced by Kabanov~\cite{kab99}; see also Kabanov and Safarian~\cite{ks9}.

\begin{example}\label{ex:kabanov}
Consider the optimal investment-consumption problem
\begin{equation}\label{ocp}\tag{OCP}
\begin{aligned}
&\minimize & &E\sum_{t=0}^T V_t(-k_t)\quad\ovr\quad (z,k)\in\N\\
&\st  &  \Delta z_t&+u_t + k_t \in C_t,\ z_T=0\quad P\text{-a.s.}\ t=0,\ldots,T,
\end{aligned}
\end{equation}
where $z_{-1}:=0$, and for each $t$, $C_t:\Omega\tos\reals^d$ is $\F_t$-measurable\footnote{$C_t$ is {\em $\F_t$-measurable} if  $\{\omega\in\Omega\mid C_t(\omega)\cap O\ne\emptyset\}\in\F_t$ for every open $O$.} and closed convex-valued with $0\in C_t$ and $V_t$ is an $\F_t$-measurable nondecreasing (wrt $\reals^d_+$) convex normal integrand with $V_t(0)=0$.

We will assume as in Example~\ref{ex:bolza} that adapted projections of the elements of $\U$ and $\Y$ stil belong to $\U$ and $\Y$, respectively. We also assume that $\U$ is barreled and that there exists a normal integrand $\theta$ such that $E\theta$ is finite on $\U$, $E\theta^*$ is proper on $\Y$, and that
\[
V_T\left(\sum_{t=0}^T u_t\right)\le\theta(u)\quad\forall u\in\reals^{(T+1)d}.
\]
Then $(z,k)\in\N$ solves \eqref{ocp} for $u\in \U\cap\N$ if and only if $(z,k)$ is feasible and there exists a martingale $y\in\Y$ such that\footnote{Here, $\sigma_{C_t}(y_t):=\sup\{z\cdot y_t\,|\,z\in C_t\}$.}
\begin{align*}
-k_t &\in\partial V^*_t(y_t),\\
\Delta z_t+u_t+k_t &\in \partial \sigma_{C_t}(y_t)
\end{align*}
$P$-almost surely for $t=0,\ldots,T$. When $C_t$ are conical, the last condition means that
\[
\Delta z_t+u_t+k_t\in C_t,\ y_t\in C_t^*,\ (\Delta z_t+u_t+k_t)\cdot y_t = 0
\]
$P$-almost surely for $t=0,\ldots,T$. Thus, $y$ generalizes the notion of a ``consistent price system'' introduced in \cite{kab99} to nonconical market models; see \cite{pp12}.
\end{example}

\begin{proof}
Problem \eqref{ocp} can be expressed in the format of Example~\ref{ex:bolza2} with $x=(z,k)$, $u=(u^z,u^k)$ and
\begin{align*}
K_t(x_t,u_t,\omega) &=V_t(-k_t,\omega)+\delta_{C_t(\omega)}(u^z_t+k_t)\quad t=0,\dots,T-1,\\
K_T(x_T,u_T,\omega) &=V_T(-k_T,\omega)+\delta_{C_T(\omega)}(u^z_T+k_T)+\delta_{\{0\}}(z_T).
\end{align*}
The Hamiltonian becomes
\begin{align*}
H_t(x_t,y_t,\omega) &= \delta_{\{0\}} (y^k_t)+k_t\cdot y^z_t+V_t(-k_t,\omega)-\sigma_{C_t(\omega)}(y^z_t)\quad t=0,\dots,T-1,\\
H_T(x_T,y_T,\omega) &= \delta_{\{0\}} (y^k_t)+k_T\cdot y^z_T+V_T(-k_T,\omega)-\sigma_{C_T(\omega)}(y_T)+\delta_{\{0\}}(z_T).
\end{align*}
We are in the setting of Example~\ref{ex:bolza}, since the domain condition is satisfied by $\bar x=(0,\bar k)$ for any $x\in\N^\infty$, where $\bar k_t=1$ for all $t$. 

Given $u\in \U\cap\N$, we have
\[
\sum_{t=0}^T K_t(x^u_t,\Delta x_t+u_t,\omega) = V_T\left(\sum_{t=0}^T u_t\right) \le\theta(u),
\]
where $x^u=(z^u,k^u)\in\N$ is defined by $\Delta z^u_t=-u_t$, $k^u_t=0$ for $t<T$ and $z^u_T=0$ and $k^u_T=-\sum_{t=0}^Tu_t$. Thus, the assumptions of Example~\ref{ex:bolza2} are satisfied with $y=0$ and $\beta=0$. The given optimality conditions are just a special case of the optimality conditions in Example~\ref{ex:bolza2}. Indeed, the subdifferential conditions become
\begin{align*}
z_T= 0 \text{ and } E_t\Delta y^z_{t+1} &= 0 \quad\forall\ t=0,\dots,T-1,\\
E_t\Delta y^k_{t+1} &\in y^z_t - \partial V_t(-k_t)\quad\forall\ t=0,\dots,T,\\
u^z_t+\Delta z_t &\in -k_t+\partial\sigma_{C_t}(y^z_t)\quad\forall\ t=0,\dots,T,\\
y^k_t &=0\quad\forall\ t=0,\dots, T.
\end{align*}
The first condition means that $y^z$ is a martingale and $z_T=0$. The second and the last condition mean that $y^k=0$ and $y^z_t\in\partial V_t(-k_t)$. By \cite[Theorem~12]{roc74}, the subdifferential condition can written as $-k_t \in\partial V^*_t(y_t)$. 
\end{proof}

\bibliographystyle{plain}
\bibliography{sp}

\end{document}